\documentclass{amsart}
\pdfoutput=1
\usepackage{amsfonts}
\usepackage{mathrsfs}
\usepackage{amssymb}
\usepackage{amsmath}
\usepackage{amsrefs}
\usepackage{centernot}
\usepackage{mathtools}

\newtheorem{theorem}{Theorem}[section]
\newtheorem{lemma}[theorem]{Lemma}
\newtheorem{proposition}[theorem]{Proposition}

\theoremstyle{definition}
\newtheorem{definition}[theorem]{Definition}

\theoremstyle{remark}
\newtheorem{remark}[theorem]{Remark}

\allowdisplaybreaks

\newcommand{\CC}{\mathbb{C}}
\newcommand{\RR}{\mathbb{R}}

\newcommand{\spn}{\mathrm{span}}
\newcommand{\V}{\mathcal V}

\DeclarePairedDelimiter{\ceil}{\lceil}{\rceil}

\begin{document}

\title{The quantum Ramsey numbers $QR(2,k)$}
\author{Andrew Allen}
\address{Mathematics and Statistics, Dalhousie University, Halifax, Nova Scotia}
\email{an847773@dal.ca}
\author{Andre Kornell}
\address{Mathematical Sciences, New Mexico State University, Las Cruces, New Mexico}
\email{kornell@nmsu.edu}
\thanks{This work was supported by the Air Force Office of Scientific Research under Award No.~FA9550-21-1-0041 and by the National Science Foundation under Award No.~DMS-2231414.}
\subjclass[2020]{}
\keywords{}

\begin{abstract}
Operator systems of matrices can be viewed as quantum analogues of finite graphs. This analogy suggests many natural combinatorial questions in linear algebra. We determine the quantum Ramsey numbers $QR(2,k)$ and the lower quantum Tur\'{a}n numbers $T^\downarrow(n, m)$ with $m \geq n/4$. In particular, we conclude that $QR(2,2) = 4$ and confirm Weaver's conjecture that $T^\downarrow(4, 1) = 4$. We also obtain a new result for the existence of anticliques in quantum graphs of low dimension.
\end{abstract}

\maketitle

\section{Introduction}

A \emph{quantum graph} on the von Neumann algebra $M_n := M_n(\CC)$ is a subspace $\V \subseteq M_n$ such that $I_n \in \V$ and such that $A^* \in \V$ for all $A \in \V$ \cite{DuanSeveriniWinter}*{Section~II}. In other words, it is an operator system in $M_n$ \cite{ChoiEffros}*{p.~157}. Quantum graphs were first introduced in \cite{DuanSeveriniWinter} as a quantum analogue of confusability graphs of classical information channels. This notion was then generalized to arbitrary von Neumann algebras \cites{KuperbergWeaver, Weaver}. A quantum graph on $\CC^n$ is essentially just a simple graph on the vertex set $\{1, \ldots, n\}$, with the convention that each vertex is adjacent to itself.

This paper concerns an analogue of Ramsey's theorem for finite simple graphs.

\begin{theorem}[\cite{Ramsey}*{Theorem~B}]\label{Ramsey}
Let $j$ and $k$ be positive integers. Then, there exists a positive integer $N$ such that, for all $n \geq N$, each simple graph $G$ on the set $[n]= \{1, \ldots, n\}$ has a $j$-clique or a $k$-anticlique.
\end{theorem}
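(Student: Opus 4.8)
The plan is to prove the quantitative refinement from which Theorem~\ref{Ramsey} follows immediately: define $R(j,k)$ to be the least integer $N$ with the stated property, and show by induction on $j+k$ that $R(j,k)$ is finite, with $R(j,k) \leq \binom{j+k-2}{j-1}$. First I would dispose of the base cases $j = 1$ and $k = 1$. Here one uses that a single vertex is simultaneously a $1$-clique and a $1$-anticlique: since a clique or anticlique is a condition on the edges among the chosen vertices, and there are no such edges to check, every graph on $n \geq 1$ vertices contains both. Thus $R(1,k) = R(j,1) = 1$, and the self-loop convention from the paragraph preceding the theorem causes no trouble.

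For the inductive step, fix $j, k \geq 2$ and set $N = R(j-1,k) + R(j,k-1)$, which is finite by the inductive hypothesis. Let $G$ be a simple graph on $[n]$ with $n \geq N$; it suffices to treat $n = N$, since a graph on more vertices restricts to an induced subgraph on $N$ of them. Pick any vertex $v$ and partition the remaining $n-1$ vertices into the set $A$ of neighbours of $v$ and the set $B$ of non-neighbours. Because $|A| + |B| = n - 1 = R(j-1,k) + R(j,k-1) - 1$, the pigeonhole principle forces $|A| \geq R(j-1,k)$ or $|B| \geq R(j,k-1)$.

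In the first case the induced subgraph on $A$ contains a $(j-1)$-clique or a $k$-anticlique; in the former subcase, adjoining $v$, which is adjacent to every vertex of $A$, yields a $j$-clique of $G$, and in the latter subcase we are done already. The second case is symmetric, with a $(k-1)$-anticlique in $B$ plus $v$ producing a $k$-anticlique. This proves the recurrence $R(j,k) \leq R(j-1,k) + R(j,k-1)$, and Pascal's rule together with the base cases delivers the binomial bound. There is no genuine obstacle in this argument; the only care required is the bookkeeping in the base cases and the reduction to $n = N$. The binomial estimate $R(j,k) \leq \binom{j+k-2}{j-1}$ is what we will invoke later when controlling the classical Ramsey numbers that appear inside the quantum estimates.
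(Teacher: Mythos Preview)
Your argument is the standard Erd\H{o}s--Szekeres induction and is correct as a proof of the classical Ramsey theorem, including the binomial bound $R(j,k)\le\binom{j+k-2}{j-1}$. However, there is nothing in the paper to compare it to: Theorem~\ref{Ramsey} is stated with a citation to Ramsey's original paper and is not proved in the text. It serves only as motivation for the quantum analogue, Theorem~\ref{WeaverRamsey}.

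One point deserves correction. Your final sentence asserts that the binomial estimate on $R(j,k)$ will be invoked later ``when controlling the classical Ramsey numbers that appear inside the quantum estimates.'' The paper does no such thing. None of the proofs in Section~2 use classical Ramsey numbers; the existence of anticliques comes from the Li--Poon--Sze theorem on higher-rank numerical ranges (Theorem~\ref{LiPoonSze}), and the existence of $2$-cliques is quoted from Weaver. So while your proof of the classical statement is fine, the anticipated downstream use is a misreading of the paper's strategy.
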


\noindent Recall that a \emph{clique} is a subset of $[n]$ such that each pair of distinct elements is adjacent and that an \emph{anticlique} is a subset of $[n]$ such that each pair of distinct elements is not adjacent. A \emph{$j$-clique} is a clique of cardinality $j$, and a \emph{$k$-anticlique} is an anticlique of cardinality $k$. The minimum positive integer $N$ that satisfies the conclusion of Theorem~\ref{Ramsey} is the \emph{Ramsey number} $R(j,k)$. The determination of Ramsey numbers is a difficult problem even for small parameters; most recently, McKay and Radziszowski showed that $R(4,5) = 25$  \cite{McKayRadziszowski}.

Weaver proved the following quantum analogue of Theorem~\ref{Ramsey}.

\begin{theorem}[\cite{Weaver2}*{Theorem~4.5}]\label{WeaverRamsey}
Let $j$ and $k$ be positive integers. Then, there exists a positive integer $N$ such that, for all $n \geq N$, each quantum graph $\V$ on the von Neumann algebra $M_n$ has a $j$-clique or a $k$-anticlique.
\end{theorem}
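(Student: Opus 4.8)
The plan is to mimic the recursive proof of Theorem~\ref{Ramsey}, with ``pass to the neighbourhood of a vertex'' replaced by ``compress $\V$ to a subspace''. For a subspace $S \subseteq \CC^n$ with orthogonal projection $P_S$, recall that $S$ is a \emph{clique} of $\V$ when the compression $A \mapsto P_SAP_S$ carries $\V$ onto $P_SM_nP_S$, and an \emph{anticlique} when $P_S\V P_S$ is a commuting family; write $\V_W := P_W\V P_W$, again a quantum graph, on $M_{\dim W}$. The feature that makes the induction run is that both notions are \emph{compression stable}: if $S \subseteq W$ then $P_S = P_SP_W$, so $P_S\V P_S = P_S\V_WP_S$, and hence $S$ is a clique (resp.\ anticlique) of $\V$ if and only if it is one of $\V_W$. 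So cliques and anticliques located inside a compression transfer back to $\V$ for free.

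First I would dispose of the base cases and set up a robust anticlique tool. For any unit $v$, the line $\CC v$ is both a $1$-clique ($P_{\CC v}\V P_{\CC v} = \CC vv^\ast = P_{\CC v}M_nP_{\CC v}$) and a $1$-anticlique, so the statement holds with $N=1$ when $j=1$ or $k=1$. Next: if $\dim\V = d$ then $\V$ has an anticlique of dimension at least $\lfloor(n-1)/d\rfloor+1$. Build unit vectors $u_1,u_2,\dots$ greedily with $u_m \in \bigcap_{i<m}(\V u_i)^\perp$, feasible while $(m-1)d \le n-1$ since each image $\V u_i := \{Au_i : A \in \V\}$ has dimension at most $d$; because $\{w : u_iw^\ast\in\V^\perp\} = (\V u_i)^\perp$ (the trace pairing is $\ast$-invariant and $\V = \V^\ast$), one gets $u_au_b^\ast\in\V^\perp$ for all $a\ne b$, so every $P_SAP_S$ with $S = \spn\{u_1,\dots,u_m\}$ is diagonal in $\{u_a\}$, i.e.\ $S$ is an anticlique. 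Thus a quantum graph of small enough dimension has a $k$-anticlique, and the inductive step below can always fall back on this estimate.

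For the inductive step take $j,k\ge 2$ and $\V$ on $M_n$ with $n$ large. Pick a unit vector $v$ with $vv^\ast\in\V$ (a \emph{looped} vector) and decompose $v^\perp$ into the \emph{matched neighbourhood} $A = \{w\perp v : vw^\ast\in\V\}$, the \emph{non-neighbourhood} $B = (\V v)^\perp \subseteq v^\perp$, and the remainder $G$. If $\dim B$ is large, apply the $(j,k-1)$ case to $\V_B$: a $j$-clique transfers back, and a $(k-1)$-anticlique $S\subseteq B$ gives the $k$-anticlique $S\oplus\CC v$, since $S\subseteq(\V v)^\perp = (\V^\ast v)^\perp$ forces $P_SAv = 0$ and $v^\ast AP_S = 0$ for all $A$, making every $P_{S\oplus\CC v}AP_{S\oplus\CC v}$ diagonal in one basis. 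If $\dim A$ is large, apply the $(j-1,k)$ case to $\V_A$: a $k$-anticlique transfers back, and a $(j-1)$-clique $T\subseteq A$ gives the $j$-clique $T\oplus\CC v$, because one can realise the block $\left(\begin{smallmatrix}X&0\\0&0\end{smallmatrix}\right)$ from any witness of $X\in P_T\V P_T = M_{j-1}$ after subtracting suitable operators $yv^\ast$ and $v\hat z^\ast$ with $y,\hat z\in T$ (these lie in $\V$ precisely because $T\subseteq A$) and a multiple of $vv^\ast$; together with $\{yv^\ast\}$, $\{v\hat z^\ast\}$, and $vv^\ast$ this exhausts $M_j$.

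The hard part is to guarantee that this recursion actually produces a clique, and it has two faces. Unlike the classical situation, the image $\V v$ can strictly exceed $A$, so $A$ and $B$ need not cover $v^\perp$; the remainder $G$ combines with $v$ in neither way, and recursing into $\V_G$ with unchanged $j,k$ is not a well-founded descent. Moreover a quantum graph need not contain \emph{any} looped vector — a generic $\ast$-closed $I_n$-containing subspace of $M_n$ of dimension $n$ has no rank-one positive element and no large anticlique — so the pivot may be unavailable, and then one must check the $vv^\ast$-corner can instead be filled by some $A\in\V$ with $P_TAP_T = 0$ and $v^\ast Av\ne 0$. I expect both faces to yield to a dimension count: the locus of bad pivots $v$ (and clique candidates $T$) where the decoupling fails, $\dim G$ is large, or that corner cannot be filled should be a proper subvariety, and where it is everything one should be able to read off structure of $\V$ (a large subspace on which $\V$ is commutative, or a shared kernel direction) and extract a $k$-anticlique directly; alternatively one can handle the densest range with a separate Tur\'an-type inequality forcing $j$-cliques in quantum graphs of codimension $O(n)$ and only run the recursion at intermediate dimensions. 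Invoking Theorem~\ref{Ramsey} by first extracting a subspace on which $\V$ compresses to a classical quantum graph is possible in principle, but producing such a subspace is itself the clique-finding problem, so it does not sidestep the difficulty.
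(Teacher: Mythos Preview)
The paper does not prove Theorem~\ref{WeaverRamsey}; it is quoted from \cite{Weaver2}*{Theorem~4.5} and used only as background. There is therefore no ``paper's own proof'' to compare against, and your sketch is being measured against Weaver's original argument rather than anything in this article.

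On its own merits, your proposal is not a proof but an outline with an openly acknowledged gap. You yourself flag the two obstructions --- the remainder $G$ need not be small, and a looped vector $v$ with $vv^\ast\in\V$ need not exist --- and then offer only heuristics (``I expect both faces to yield to a dimension count'', ``alternatively one can handle the densest range with a separate Tur\'an-type inequality'') rather than arguments. These are exactly the places where the classical recursion breaks down in the quantum setting, and Weaver's actual proof does not proceed by this neighbourhood/non-neighbourhood split; it uses a separate structural argument to force cliques once $\dim\V$ is large enough, and a different mechanism for anticliques. So the missing piece is not a detail but the core of the theorem.

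There is also a definitional slip that undermines the parts you do carry out. You define an anticlique by ``$P_S\V P_S$ is a commuting family'', and your greedy construction and your $S\oplus\CC v$ step only produce compressions that are \emph{diagonal} in some basis. The paper (and \cite{Weaver2}) requires the stronger condition $P_S\V P_S=\spn\{P_S\}$, i.e.\ every compressed operator is a \emph{scalar} multiple of $P_S$. Diagonal is not scalar: with your greedy vectors $u_1,\dots,u_m$ one gets $u_a^\ast A u_b=0$ for $a\neq b$, but the diagonal entries $u_a^\ast A u_a$ can differ, so $S$ need not be an anticlique in the required sense. The same issue recurs in the $(j,k-1)\to(j,k)$ step. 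Any correct argument must control those diagonal entries as well, which is why results like Theorem~\ref{LiPoonSze} or \cite{Weaver3}*{Theorem~2.10} are needed rather than a bare dimension count.
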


\noindent In this context, a \emph{clique} is a projection $P \in M_n$ such that $P \V P = P M_n P$, and an \emph{anticlique} is a projection $P \in M_n$ such that $P \V P = \spn\{P\}$. A $j$-clique is a clique of rank $j$, and a $k$-anticlique is an anticlique of rank $k$. The minimum positive integer $N$ that satisfies the conclusion of Theorem~\ref{WeaverRamsey} is the \emph{quantum Ramsey number} $QR(j,k)$.

It is immediate that $QR(j, 1) = 1$ and $QR(1, k) = 1$ for all positive integers $j$ and $k$ because $P \V P = \spn\{P\}$ for any rank-$1$ projection $P$. The other quantum Ramsey numbers are nontrivial, and to our knowledge, none were known prior to this work. We prove that $QR(2,2) = 4$ and, more generally, that $QR(2, k) = 3k - 2$ for every positive integer $k$. We argue that $\V$ has a $2$-clique if $\dim(\V) \geq 4$, as observed by Weaver \cite{Weaver2}, and that $\V$ has a $k$-anticlique if $\dim(\V) \leq 3$, as a consequence of a theorem of Li, Poon, and Sze on the higher-rank numerical range of a matrix \cite{LiPoonSze}. We do not determine the quantum Ramsey numbers $QR(j, 2)$ for arbitrary positive integers $j$; the familiar symmetry between cliques and anticliques does not occur for quantum graphs.

The guaranteed existence of a $k$-anticlique when $\dim(\V) \leq 3$ and $n \geq 3k - 2$ can be expressed as $T^\downarrow(n,m) \geq 4$ for $n > 3m$. The \emph{lower quantum Tur\'{a}n number} $T^\downarrow(n,m)$ is the smallest integer $d$ such that there exists an operator system $\V \subseteq M_n$ with $\dim \V = d$ and with no $(m+1)$-anticliques \cite{Weaver3}*{sec.~2}. Implicitly, $n > m$. The lower quantum Tur\'{a}n number is an indirect quantum analogue of the Tur\'{a}n number, which is the number of edges in a Tur\'{a}n graph \cite{Turan}.

Combining the inequality $T^\downarrow(n,m) \geq 4$ for $n > 3m$ with \cite{Weaver3}*{Theorem~2.11}, we find that $T^\downarrow(n,m) = 4$ for $3m < n \leq 4m$. This confirms Weaver's conjecture that $T^\downarrow(4, 1) = 4$ \cite{Weaver3}*{p.~343}. More generally, this confirms Weaver's conjecture that $T^\downarrow(n,m) = \ceil{\frac n m}$ in the range $m \geq n /4$ since it holds in the range $m \geq n / 3$ by \cite{Weaver3}*{Theorem~2.10} and \cite{Weaver3}*{Theorem~2.11}.

Finally, we establish a new lower bound on $n$ that guarantees the existence of a $k$-anticlique in a $d$-dimensional quantum graph $\V \subseteq M_n(\CC)$. This new lower bound improves on \cite{Weaver3}*{Theorem~2.10} when $d$ is small. The existence of anticliques is significant to quantum information theory because the anticliques in the quantum confusability graph of a quantum channel are exactly the zero-error quantum error-correcting codes for that quantum channel; see \cite{DuanSeveriniWinter}*{sec.~III} and \cite{KnillLaflamme}*{sec.~3}. Indeed, this equivalence was a core motivation for the introduction of quantum graphs \cite{DuanSeveriniWinter}.

\section{Results}

We first prove that every $3$-dimensional quantum graph $\V \subseteq M_n$ has a $k$-anticlique whenever $n \geq 3k - 2$. To do so, we apply a theorem of Li, Poon, and Sze on the higher numerical range of a matrix \cite{LiPoonSze}. This notion was introduced by Choi, Kribs, and \.{Z}yczkowski \cite{ChoiKribsZyczkowski}.

\begin{definition}[\cite{ChoiKribsZyczkowski}*{Equation~1}]
The \emph{rank-$k$ numerical range} of a matrix $A \in M_n$ is the set
$
\Lambda_k(A) = \{\lambda \in \CC \mid PAP = \lambda P \text{ for some rank-$k$ projection }P\}.
$
\end{definition}
\noindent Note that the rank-$1$ numerical range of $A$ is its numerical range in the usual sense.

Choi, Giesinger, Holbrook, and Kribs conjectured that $\Lambda_2(A) \neq \emptyset$ for all $A \in M_4$ \cite{ChoiGiesingerHolbrookKribs}*{Remark~2.10}. Li, Poon, and Sze confirmed this conjecture, proving the following.

\begin{theorem}[\cite{LiPoonSze}*{Theorem~1}]\label{LiPoonSze}
If $n \geq 3k -2$, then $\Lambda_k(A) \neq \emptyset$ for all $A \in M_n$.
\end{theorem}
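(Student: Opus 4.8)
The plan is to pass from the higher-rank numerical range to the more tractable set
\[
\Xi_k(A) = \bigcap_{\theta \in [0, 2\pi)} \Pi_\theta, \qquad \Pi_\theta = \{\, z \in \CC : \mathrm{Re}(e^{-\I\theta}z) \le g(\theta) \,\}, \qquad g(\theta) := \lambda_k\bigl(\mathrm{Re}(e^{-\I\theta}A)\bigr),
\]
where $\mathrm{Re}(B) = (B + B^*)/2$ and $\lambda_1(H) \ge \lambda_2(H) \ge \cdots$ are the eigenvalues of a Hermitian matrix $H$. The inclusion $\Lambda_k(A) \subseteq \Xi_k(A)$ falls out of Cauchy interlacing: if $PAP = \mu P$ with $\rank P = k$, then $\mathrm{Re}(e^{-\I\theta}A)$ compresses to $\mathrm{Re}(e^{-\I\theta}\mu)\,I_k$ on the range of $P$, whose $k$-th largest eigenvalue $\mathrm{Re}(e^{-\I\theta}\mu)$ is therefore at most $\lambda_k(\mathrm{Re}(e^{-\I\theta}A)) = g(\theta)$, so $\mu \in \Xi_k(A)$. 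The reverse inclusion $\Xi_k(A) \subseteq \Lambda_k(A)$ is the description of the higher-rank numerical range as an intersection of half-planes, a theorem of Li and Sze; I would invoke it as a black box, reducing the problem to showing $\Xi_k(A) \neq \emptyset$ whenever $n \ge 3k - 2$.

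For that, note that the $\Pi_\theta$ are closed half-planes indexed by the compact circle, with $g$ continuous. If every three of the $\Pi_\theta$ have a common point, then by Helly's theorem in the plane so does every finite subfamily, and since $\Pi_0 \cap \Pi_{2\pi/3} \cap \Pi_{4\pi/3}$ is compact — it is bounded because its three outward normals positively span $\RR^2$ — the finite intersection property forces $\Xi_k(A) \ne \emptyset$. It thus suffices to rule out $\Pi_{\theta_1} \cap \Pi_{\theta_2} \cap \Pi_{\theta_3} = \emptyset$. Were this to hold, the Farkas lemma applied to this infeasible system of three linear inequalities would produce $\beta_1, \beta_2, \beta_3 \ge 0$, not all zero, with $\sum_j \beta_j(\cos\theta_j, \sin\theta_j) = 0$ and $\sum_j \beta_j g(\theta_j) < 0$. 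Since $\mathrm{Re}(e^{-\I\theta_j}A) = \cos\theta_j\,\mathrm{Re}(A) + \sin\theta_j\,\mathrm{Im}(A)$, the first relation says $\sum_j \beta_j\, \mathrm{Re}(e^{-\I\theta_j}A) = 0$.

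So it remains to establish the eigenvalue inequality: if $H_1, H_2, H_3 \in M_n$ are Hermitian with $\sum_j \beta_j H_j = 0$ for some $\beta_j \ge 0$, and $n \ge 3k - 2$, then $\sum_j \beta_j \lambda_k(H_j) \ge 0$; applied to $H_j = \mathrm{Re}(e^{-\I\theta_j}A)$ this contradicts $\sum_j \beta_j g(\theta_j) < 0$ and finishes the proof. If some $\beta_j$ vanishes, the inequality collapses, via $\lambda_k(-cH) = -c\,\lambda_{n-k+1}(H)$ for $c \ge 0$, to $\lambda_k(H) \ge \lambda_{n-k+1}(H)$, which holds as $n \ge 2k-1$. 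If all $\beta_j > 0$, rescaling $H_j \mapsto \beta_j H_j$ reduces to $H_1 + H_2 + H_3 = 0$; there $\lambda_k(H_3) = -\lambda_{n-k+1}(H_1+H_2)$, Weyl's inequality gives $\lambda_{n-k+1}(H_1+H_2) \le \lambda_k(H_1) + \lambda_{n-2k+2}(H_2)$, and $\lambda_{n-2k+2}(H_2) \le \lambda_k(H_2)$ precisely because $n - 2k + 2 \ge k$, so $\lambda_k(H_1) + \lambda_k(H_2) + \lambda_k(H_3) \ge 0$.

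The genuine obstacle — for anyone unwilling to quote it — is the reverse inclusion $\Xi_k(A) \subseteq \Lambda_k(A)$: from the bare sign data $\mathrm{Re}(e^{-\I\theta}\mu) \le g(\theta)$ one must actually exhibit a $k$-dimensional subspace on which both $\mathrm{Re}(A)$ and $\mathrm{Im}(A)$ compress to scalar matrices, a construction that already encompasses the convexity of $\Lambda_k(A)$. Everything else is routine, and it is exactly the invocation of Weyl's inequality, through the requirement $n - 2k + 2 \ge k$, that determines the precise threshold $n \ge 3k - 2$.
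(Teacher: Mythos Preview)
The paper does not prove this theorem; it quotes it verbatim as \cite{LiPoonSze}*{Theorem~1} and uses it as a black box, so there is no in-paper proof to compare against. Your sketch is in fact a faithful reconstruction of the original Li--Poon--Sze argument: reduce via the Li--Sze half-plane description $\Lambda_k(A)=\Xi_k(A)$ to showing $\Xi_k(A)\neq\emptyset$, apply Helly in the plane together with a compactness argument to reduce to triples of half-planes, and then use Farkas and Weyl to obtain the eigenvalue inequality $\sum_j\beta_j\lambda_k(H_j)\ge 0$ for Hermitian $H_j$ with $\sum_j\beta_j H_j=0$, where the constraint $n-2k+2\ge k$ pinpoints $n\ge 3k-2$. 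The steps check: the Cauchy-interlacing direction $\Lambda_k(A)\subseteq\Xi_k(A)$ is immediate; the Helly-plus-compactness passage from ``every three half-planes meet'' to ``all half-planes meet'' is handled correctly by singling out one compact triple intersection; and the Weyl computation in the all-positive case, together with the degenerate two-term case via $\lambda_k(-cH)=-c\,\lambda_{n-k+1}(H)$, is accurate.

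The one honest dependency you flag yourself is the reverse inclusion $\Xi_k(A)\subseteq\Lambda_k(A)$, which you take from Li--Sze; that is exactly what Li--Poon--Sze do as well, so your proof is self-contained to the same extent as the source you would be citing. Since the present paper offers no alternative argument, there is nothing further to contrast.
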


We find it more convenient to work with isometries rather than projections. Thus, we reformulate this theorem to say that, if $n \geq 3k -2$, then for each matrix $A \in M_n$, there exists an isometry $J \in M_{n \times k}$ such that $J^\dagger A J \in \mathrm{span}\{I_k\}$. Similarly, we observe that a quantum graph $\V \subseteq M_n$ has a $k$-anticlique iff $J^\dagger \V J = \mathrm{span}\{I_k\}$ for some isometry $J \in M_{n \times k}$.

\begin{lemma}\label{lemma}
Let $\V \subseteq M_n$ be a quantum graph with $\dim \V \leq 3$. If $n \geq  3k -2$, then $\V$ has a $k$-anticlique.
\end{lemma}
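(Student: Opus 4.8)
The plan is to reduce the case $\dim\V \le 3$ to the rank-$k$ numerical range theorem of Li, Poon, and Sze (Theorem~\ref{LiPoonSze}), applied not to a single matrix but simultaneously to a well-chosen basis of $\V$. Since $\V$ is an operator system, $I_n \in \V$, so if $\dim\V \le 2$ we may write $\V = \spn\{I_n, A\}$ for a single $A$ that, after subtracting a multiple of $I_n$ and rescaling, we may take to be self-adjoint. Then an isometry $J \in M_{n\times k}$ with $J^\dagger A J \in \spn\{I_k\}$ automatically gives $J^\dagger\V J = \spn\{I_k\}$, and such a $J$ exists by Theorem~\ref{LiPoonSze} whenever $n \ge 3k-2$. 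So the only real content is the case $\dim\V = 3$.

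For $\dim\V = 3$, write $\V = \spn\{I_n, A, B\}$; replacing $A, B$ by suitable linear combinations with $I_n$ and taking real and imaginary parts, I may assume $A$ and $B$ are self-adjoint and together with $I_n$ span $\V$. The goal is an isometry $J \in M_{n\times k}$ with both $J^\dagger A J$ and $J^\dagger B J$ scalar. The trick I expect to use is to combine $A$ and $B$ into a single \emph{complex} matrix $C := A + \I B$ and apply Theorem~\ref{LiPoonSze} to $C$: this yields an isometry $J$ with $J^\dagger C J = \lambda I_k$ for some $\lambda \in \CC$. But $J^\dagger C J = J^\dagger A J + \I J^\dagger B J$, and since $J^\dagger A J$ and $J^\dagger B J$ are both self-adjoint, this is precisely the decomposition of $\lambda I_k$ into its self-adjoint and skew-adjoint parts; hence $J^\dagger A J = (\operatorname{Re}\lambda) I_k$ and $J^\dagger B J = (\operatorname{Im}\lambda) I_k$. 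Thus $J^\dagger\V J \subseteq \spn\{I_k\}$, and since $J^\dagger I_n J = I_k \neq 0$, equality holds, so the range of $J$ is a $k$-anticlique.

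The step I expect to be the crux is recognizing that the real and imaginary parts can be separated \emph{for free} from a single scalar-valued outcome, i.e.\ that compressing the non-self-adjoint matrix $C$ to a scalar is equivalent to simultaneously compressing the two self-adjoint matrices $A$ and $B$ to scalars. This works only because the target is one-dimensional ($\spn\{I_k\}$), so there is no room for a cross term; this is exactly why the argument caps out at $\dim\V \le 3$ and does not extend to higher dimensions. A minor point to check carefully is the reduction at the start: that any operator system of dimension $\le 3$ can be written as $\spn\{I_n, A, B\}$ with $A, B$ self-adjoint — this follows because $\V = \V^*$ and $I_n \in \V$, so $\V$ is spanned by $I_n$ together with self-adjoint elements (replace each basis element $X$ by $X + X^*$ and $\I(X - X^*)$, then extract a spanning subset). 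Once the reduction is in place, the rest is a direct application of Theorem~\ref{LiPoonSze} to $A + \I B$ (or to $A$ alone when $\dim\V \le 2$).
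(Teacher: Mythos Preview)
Your proposal is correct and follows essentially the same approach as the paper: write $\V = \spn\{I_n, A_1, A_2\}$ with $A_1, A_2$ Hermitian, apply Theorem~\ref{LiPoonSze} to the single complex matrix $A_1 + \I A_2$, and then separate real and imaginary parts to conclude that both $J^\dagger A_1 J$ and $J^\dagger A_2 J$ are scalar. The paper handles all cases $\dim\V \le 3$ at once by allowing $A_1, A_2$ to be linearly dependent with $I_n$, whereas you treat $\dim\V \le 2$ separately, but this is a purely cosmetic difference.
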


\begin{proof}
Assume that $n \geq  3k -2$. Since $\V$ is an operator system with $\dim(\V) \leq 3$, $\V = \spn\{I_n, A_1, A_2\}$ for some Hermitian matrices $A_1, A_2 \in M_n$. Let $A = A_{1} + i A_{2}$.
By Theorem~\ref{LiPoonSze}, there exist an isometry $J \in M_{n \times k}$ and $\lambda_1, \lambda_2 \in \RR$ such that
$$
J^\dagger A_1 J + i J^\dagger A_2 J = J^\dagger A J = (\lambda_1 + i \lambda_2) I_k = \lambda_1 I_k + i \lambda_2 I_k,
$$
and thus such that 
$$
J^\dagger I_n J = I_k, \qquad J^\dagger A_1 J = \lambda_1 I_k, \qquad J^\dagger A_2 J = \lambda_2 I_k.
$$
Therefore, if $n \geq  3k -2$, then $\V$ has a $k$-anticlique.
\end{proof}

We now use this lemma to confirm Weaver's conjecture that $T^\downarrow(n,m) = \ceil{\frac n m}$ for a new range of parameters, establishing the value of $T^\downarrow(4,1)$ \cite{Weaver3}*{p.~343}.

\begin{theorem}
If $3m < n \leq 4m$, then $T^\downarrow(n,m) = 4$.
\end{theorem}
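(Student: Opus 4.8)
The plan is to derive the equality from two matching inequalities: a lower bound $T^\downarrow(n,m) \geq 4$ supplied by Lemma~\ref{lemma}, and an upper bound $T^\downarrow(n,m) \leq 4$ supplied by \cite{Weaver3}*{Theorem~2.11}.

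For the lower bound, I would apply Lemma~\ref{lemma} with $k = m+1$. The point to check is that the hypothesis $3m < n$ of the present theorem implies the hypothesis $n \geq 3k - 2$ of the lemma: since all quantities are integers, $3m < n$ gives $n \geq 3m + 1 = 3(m+1) - 2 = 3k - 2$. Hence every quantum graph $\V \subseteq M_n$ with $\dim \V \leq 3$ has a $k$-anticlique, that is, an $(m+1)$-anticlique. By the definition of $T^\downarrow(n,m)$ as the least dimension of an operator system in $M_n$ having no $(m+1)$-anticlique, no operator system of dimension at most $3$ witnesses this, so $T^\downarrow(n,m) \geq 4$.

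For the upper bound, I would invoke \cite{Weaver3}*{Theorem~2.11}, which gives $T^\downarrow(n,m) \leq \ceil{\frac n m}$. In the range $3m < n \leq 4m$ we have $\ceil{\frac n m} = 4$, so $T^\downarrow(n,m) \leq 4$. Combining the two bounds yields $T^\downarrow(n,m) = 4$, and specializing to $n = 4$, $m = 1$ recovers Weaver's conjectured value $T^\downarrow(4,1) = 4$.

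I do not expect a genuine obstacle here: the theorem is essentially a bookkeeping combination of Lemma~\ref{lemma} with a known upper bound. The only steps requiring care are the integer manipulation $3m < n \iff n \geq 3(m+1) - 2$ that licenses the application of Lemma~\ref{lemma}, and the elementary verification that $\ceil{\frac n m} = 4$ holds throughout $3m < n \leq 4m$.
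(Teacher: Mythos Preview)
Your proposal is correct and follows exactly the paper's approach: the lower bound $T^\downarrow(n,m) \geq 4$ from Lemma~\ref{lemma} and the upper bound $T^\downarrow(n,m) \leq \ceil{n/m} \leq 4$ from \cite{Weaver3}*{Theorem~2.11}. You have simply spelled out the integer arithmetic that the paper leaves implicit.
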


\begin{proof}
We have that $T^\downarrow(n,m) \geq 4$ by Lemma~\ref{lemma} and that $T^\downarrow(n,m) \leq \ceil{\frac n m} \leq 4$ by \cite{Weaver3}*{Theorem~2.11}.
\end{proof}

In order to compute the quantum Ramsey numbers $QR(2,k)$ for $k\geq2$, we first exhibit a quantum graph that has neither a $2$-clique nor a $k$-anticlique.

\begin{proposition}\label{proposition}
The quantum graph $\V = \spn\{A_1, A_2, A_3\}$, where 
$$
A_1 = \left( \begin{matrix} I_{k-1} & 0 & 0 \\ 0 & 0 & 0 \\ 0 & 0 & 0 \end{matrix}\right), \quad  A_2 = \left( \begin{matrix} 0 & 0 & 0 \\ 0 & I_{k-1} & 0 \\ 0 & 0 & 0 \end{matrix}\right), \quad A_3 = \left( \begin{matrix} 0 & 0 & 0 \\ 0 & 0 & 0 \\ 0 & 0 & I_{k-1}\end{matrix}\right),
$$
has no $2$-cliques and no $k$-anticliques. Note that $\V \subseteq M_n$, where $n = 3k -3$.
\end{proposition}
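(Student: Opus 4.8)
The plan is to check three things in turn: that $\V$ is genuinely a quantum graph of dimension $3$, that it has no $2$-clique, and that it has no $k$-anticlique. For the first, note that $A_1 + A_2 + A_3 = I_n$ with $n = 3(k-1) = 3k-3$, that each $A_i$ is Hermitian (indeed a projection), and that $A_1, A_2, A_3$ are linearly independent since they are supported on disjoint diagonal blocks. Hence $\V$ is a self-adjoint subspace containing $I_n$, i.e.\ an operator system in $M_n$, with $\dim \V = 3$.

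For the absence of $2$-cliques I would argue by a crude dimension count. If $P \in M_n$ were a rank-$2$ projection with $P\V P = PM_nP$, then $PM_nP \cong M_2(\CC)$ would have dimension $4$, whereas $P\V P = \spn\{PA_1P, PA_2P, PA_3P\}$ has dimension at most $3$, a contradiction. (In fact this shows that no quantum graph of dimension at most $3$ has a $j$-clique for any $j \geq 2$, which is the relevant half of Weaver's observation.)

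For the absence of $k$-anticliques I would invoke the reformulation recorded just before Lemma~\ref{lemma}: a $k$-anticlique would produce an isometry $J \in M_{n \times k}$ with $J^\dagger \V J = \spn\{I_k\}$, so that $J^\dagger A_i J = \lambda_i I_k$ for scalars $\lambda_1, \lambda_2, \lambda_3$, which moreover satisfy $\lambda_1 + \lambda_2 + \lambda_3 = 1$ because $\sum_i J^\dagger A_i J = J^\dagger I_n J = I_k$. The key point is that $\rank(A_i) = k - 1$, so the composite $A_i J \colon \CC^k \to \CC^n$ has rank at most $k-1 < k$ and therefore admits a nonzero vector $v$ in its kernel; then $\lambda_i v = J^\dagger A_i J v = J^\dagger(A_i J v) = 0$ forces $\lambda_i = 0$. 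As this argument applies to each $i$, we obtain $I_k = \lambda_1 I_k + \lambda_2 I_k + \lambda_3 I_k = 0$, which is absurd. Hence $\V$ has no $k$-anticlique.

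Neither step is a real obstacle; the only step that takes a moment's reflection is seeing that the anticlique condition forces all three scalars $\lambda_i$ to vanish simultaneously. This is precisely the rank deficiency $k-1 < k$ in action, and it is exactly what makes $n = 3k-3$ fall one short of the threshold $n = 3k-2$ supplied by Lemma~\ref{lemma}.
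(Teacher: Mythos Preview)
Your proof is correct. The argument for the absence of $2$-cliques is the same dimension count the paper uses (stated there simply as ``$\dim(\V) < 4$''). For the absence of $k$-anticliques, the paper appeals to \cite{Weaver3}*{Proposition~2.1}, whereas you give a direct, self-contained argument: the rank deficiency $\rank(A_i) = k-1 < k$ forces each compressed block $J^\dagger A_i J$ to have nontrivial kernel, hence each $\lambda_i = 0$, contradicting $\sum_i \lambda_i = 1$. Your route avoids the external citation and makes transparent exactly why $n = 3k-3$ is one short of the threshold; the paper's route is terser but requires the reader to chase a reference. Both are equally valid and of comparable depth.
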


\begin{proof}
The quantum graph $\V$ cannot have a $2$-clique because $\dim(\V) < 4$, and it cannot have a $k$-anticlique by \cite{Weaver3}*{Proposition~2.1}.
\end{proof}

\begin{theorem}
For all positive integers $k$, we have that $QR(2,k) = 3k-2$.
\end{theorem}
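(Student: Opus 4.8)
The plan is to establish matching upper and lower bounds. The lower bound $QR(2,k) \geq 3k-2$ is immediate from Proposition~\ref{proposition}: the exhibited quantum graph $\V \subseteq M_n$ with $n = 3k-3$ has no $2$-clique and no $k$-anticlique, so $QR(2,k)$ cannot be $3k-3$ or smaller; hence $QR(2,k) \geq 3k-2$. The substance is the upper bound $QR(2,k) \leq 3k-2$, which asserts that whenever $n \geq 3k-2$, every quantum graph $\V \subseteq M_n$ has a $2$-clique or a $k$-anticlique.

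For the upper bound I would split on $\dim(\V)$. If $\dim(\V) \leq 3$, then Lemma~\ref{lemma} applies directly: since $n \geq 3k-2$, the quantum graph $\V$ has a $k$-anticlique, and we are done. So it remains to handle the case $\dim(\V) \geq 4$. Here the claim is that $\V$ has a $2$-clique — a rank-$2$ projection $P$ with $P\V P = PM_nP$. This is exactly the observation attributed to Weaver \cite{Weaver2} in the introduction: "$\V$ has a $2$-clique if $\dim(\V) \geq 4$." I would either cite this directly or reconstruct the short argument: a $2$-clique for $\V$ is equivalently an isometry $J \in M_{n\times 2}$ with $J^\dagger \V J = M_2$, i.e.\ $\dim(J^\dagger \V J) = 4$; one wants to choose a $2$-dimensional subspace $W \subseteq \CC^n$ on which the compression map $A \mapsto P_W A P_W$ restricted to $\V$ is surjective onto $M_2 \cong B(W)$. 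Since $\dim \V \ge 4 = \dim M_2$, a generic $2$-dimensional subspace should work, with non-surjectivity confined to a proper subvariety; the operator-system structure (containing $I_n$ and closed under $*$) ensures the image is itself an operator system in $M_2$, and a $*$-closed, unital subspace of $M_2$ of dimension $4$ is all of $M_2$.

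Assembling these pieces: given $n \geq 3k-2$ and any quantum graph $\V \subseteq M_n$, if $\dim \V \leq 3$ invoke Lemma~\ref{lemma} for a $k$-anticlique, and if $\dim \V \geq 4$ invoke the $2$-clique result of Weaver; either way the Ramsey condition is met, so $QR(2,k) \leq 3k-2$. Combined with the lower bound from Proposition~\ref{proposition}, this gives $QR(2,k) = 3k-2$. The main obstacle, and the only place requiring real content beyond bookkeeping, is justifying that $\dim(\V) \geq 4$ forces a $2$-clique; everything else is a direct citation of Lemma~\ref{lemma} and Proposition~\ref{proposition}. I expect the cleanest writeup simply cites \cite{Weaver2} for the $2$-clique fact and then glues the two cases together in a few lines.
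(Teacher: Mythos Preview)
Your proposal is correct and follows exactly the paper's approach: the lower bound via Proposition~\ref{proposition}, and the upper bound by splitting on $\dim(\V)$, invoking Lemma~\ref{lemma} when $\dim(\V)\le 3$ and citing Weaver's result \cite{Weaver2}*{Theorem~3.3} for the $2$-clique when $\dim(\V)\ge 4$. The only cosmetic omission is the $k=1$ case, where Proposition~\ref{proposition} degenerates ($n=0$) and the paper simply notes the result is immediate.
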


\begin{proof}
The $k=1$ case is immediate. For $k\geq2$, we have that $QR(2,k) > 3k-3$ by Proposition~\ref{proposition}. It remains to show that $QR(2,k) \leq 3k-2$. Let $n \geq 3k-2$. For each quantum graph $\V \subseteq M_n$, if $\dim(\V) \leq 3$, then $\V$ has a $k$-anticlique by Lemma~\ref{lemma}, and if $\dim(\V) \geq 4$, then $\V$ has a $2$-clique by \cite{Weaver2}*{Theorem~3.3}.
\end{proof}

We conclude with a generalization of Lemma~\ref{lemma}.

\begin{theorem}\label{bounds}
Let $\ell$ be a nonnegative integer. For all quantum graphs $\V \subseteq M_n$,
\begin{enumerate}
\item if $\dim(\V) \leq 2 \ell + 1$, then $\V$ has a $k$-anticlique whenever $n > 3^\ell(k-1)$,
\item if $\dim(\V) \leq 2 \ell + 2$, then $\V$ has a $k$-anticlique whenever $n > 2\cdot 3^\ell(k-1)$.
\end{enumerate}
\end{theorem}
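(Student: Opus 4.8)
The plan is to prove both bounds by induction on $d := \dim(\V)$, iterating the compression argument from the proof of Lemma~\ref{lemma}. It is convenient to package the two statements into one: setting $\beta(2\ell+1) := 3^\ell(k-1)$ and $\beta(2\ell+2) := 2 \cdot 3^\ell(k-1)$ for $\ell \geq 0$, the claim becomes that every operator system $\V \subseteq M_n$ with $\dim(\V) = d$ has a $k$-anticlique as soon as $n > \beta(d)$. The function $\beta$ is nondecreasing and satisfies $\beta(1) = k-1$, $\beta(2) = 2(k-1)$, and $\beta(d) = 3\,\beta(d-2)$ for all $d \geq 3$; this last relation is exactly what a single application of Theorem~\ref{LiPoonSze} will deliver, so the induction steps downward by two in $d$ and by a factor of roughly three in $n$.

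For the base cases: if $d = 1$ then $\V = \spn\{I_n\}$, and any rank-$k$ projection is a $k$-anticlique provided $n \geq k$, i.e.\ $n > \beta(1)$. If $d = 2$ then $\V = \spn\{I_n, A\}$ with $A$ Hermitian, and I would invoke the (elementary, classical) description of the rank-$k$ numerical range of a Hermitian matrix: for eigenvalues $\lambda_1 \geq \dots \geq \lambda_n$ it equals $[\lambda_{n-k+1}, \lambda_k]$, which is nonempty precisely when $n \geq 2k-1$, i.e.\ $n > \beta(2)$; a rank-$k$ projection $P$ with $PAP = \mu P$ is then a $k$-anticlique. (Nonemptiness follows because, for $\mu$ in that interval, the quadratic form $A - \mu I$ admits a totally isotropic subspace of dimension $\dim\ker(A-\mu I) + \min\{\,\#\{i:\lambda_i>\mu\},\,\#\{i:\lambda_i<\mu\}\,\} \geq k$.) For the inductive step with $d \geq 3$: write $\V = \spn\{I_n, A_1, \dots, A_{d-1}\}$ with the $A_i$ Hermitian, put $B = A_1 + i A_2 \in M_n$, and apply Theorem~\ref{LiPoonSze} with $k' := \lfloor (n+2)/3\rfloor$ to get an isometry $J \in M_{n\times k'}$ with $J^\dagger B J \in \spn\{I_{k'}\}$, hence also $J^\dagger A_1 J, J^\dagger A_2 J \in \spn\{I_{k'}\}$. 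Then $\V' := J^\dagger \V J$ is an operator system in $M_{k'}$ with $\dim(\V') \leq d-2$, and since $n > \beta(d) = 3\,\beta(d-2)$ one checks $k' > \beta(d-2)$ (using that $m > 3t$ forces $\lfloor(m+2)/3\rfloor > t$ for integers $m, t$). By induction $\V'$ has a $k$-anticlique, realized by an isometry $K \in M_{k'\times k}$ with $K^\dagger \V' K = \spn\{I_k\}$; then $JK \in M_{n\times k}$ is an isometry with $(JK)^\dagger \V (JK) = \spn\{I_k\}$, so $JK(JK)^\dagger$ is a $k$-anticlique for $\V$. Unwinding, statement (1) is the case $d \leq 2\ell+1$ and statement (2) the case $d \leq 2\ell+2$, the inequalities being absorbed by monotonicity of $\beta$.

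The argument is essentially bookkeeping once the two inputs---Theorem~\ref{LiPoonSze} and the Hermitian rank-$k$ numerical range---are in hand, and I expect the only real friction to be the combinatorial accounting. Concretely, I would take care to (a) verify the floor estimate $n > 3\beta(d-2) \Rightarrow \lfloor(n+2)/3\rfloor > \beta(d-2)$ cleanly at the level of integers, including the degenerate case $k = 1$ where every $\beta(d)$ vanishes; (b) confirm that the composite $JK$ is genuinely an isometry and that passing through the compression $\V' = J^\dagger \V J$ does not lose the unit (it does not, since $J^\dagger I_n J = I_{k'}$); and (c) when $d \in \{1,2\}$ but $\ell \geq 1$ in statements (1)--(2), note that the conclusion still follows from the base cases because $\beta(d) \leq 3^\ell(k-1)$. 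Since the case $\ell = 1$ of part (1) is exactly Lemma~\ref{lemma}, this argument also re-derives that lemma as its first inductive step.
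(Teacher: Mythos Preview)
Your proposal is correct and follows essentially the same approach as the paper: iterate Theorem~\ref{LiPoonSze} to compress two Hermitian generators at a time, reducing $\dim(\V)$ by $2$ while shrinking the ambient dimension by a factor of roughly $3$. The only differences are organizational---you induct on $d$ with base cases $d=1,2$ and supply a self-contained Hermitian rank-$k$ numerical range argument for $d=2$, whereas the paper inducts on $\ell$, takes Lemma~\ref{lemma} as the base case for (1), and cites \cite{Weaver3}*{Proposition~2.7} for the base case of (2).
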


\begin{proof}
We prove (1) for all positive integers $\ell$ by induction on $\ell$. The base case $\ell = 1$ is Lemma~\ref{lemma}. Assume that $\ell$ satisfies claim (1), and let $\V \subseteq M_m$ be a quantum graph such that $\dim(\V) \leq 2 (\ell+1) +1 = 2 \ell + 3$. Let $k$ be a positive integer such that $m > 3^{\ell+1}(k-1)$, and let $n = 3^\ell(k-1) + 1$. We have that
$$
m \geq 3^{\ell+1}(k-1) + 1 = 3 (3^\ell(k-1) + 1) - 2 = 3 n - 2.
$$

Since $\V$ is an operator system with $\dim(\V) \leq 2 \ell + 3$, $\V = \spn\{I_m, A_1, \ldots, A_{2\ell+ 2}\}$ for some Hermitian matrices $A_1, \ldots, A_{2\ell+2} \in M_m$. Let $A = A_{2\ell+1} + i A_{2\ell +2}$. From Theorem~\ref{LiPoonSze}, we obtain an isometry $J \in M_{m \times n}$ such that $J A J^\dagger \in \spn\{I_n\}$. Thus, $J^\dagger A_{2\ell+1} J$ and $J^\dagger A _{2\ell +2} J$ are both in the span of $I_n$. It follows that $J^\dagger \V J = \spn\{I_n, J^\dagger A_1 J, \ldots, J^\dagger A_{2\ell}J\}$, so $\dim(J^\dagger\V J) \leq 2\ell + 1$. By the induction hypothesis, there exists an isometry $K \in M_{n \times k}$ such that
$$
K^\dagger J^\dagger \V J K = \spn\{I_k\}.
$$
In other words, the isometry $JK$ provides a $k$-anticlique in $\V$. By induction, claim~(1) holds for all positive integers $\ell$. The $\ell = 0$ case is trivial.

The proof of claim (2) is the same apart from the base case. The base case $\ell = 0$ follows from \cite{Weaver3}*{Proposition~2.7} because
$$
\left\lceil \frac n 2\right \rceil \geq \left \lceil \frac{ 2 (k-1) +1 } 2 \right \rceil = \left \lceil (k - 1) + \frac 1 2 \right \rceil = (k - 1) + 1 = k.
$$
\end{proof}

\begin{remark}
Both Theorem~\ref{bounds} and \cite{Weaver3}*{Theorem~2.10} provide conditions on the integer $n$ that guarantee that a quantum graph $\V \subseteq M_n$ with $\dim(\V) = d$ has a $k$-anticlique. The latter theorem guarantees the existence of the anticlique if
$$
(k-1) d + 1 \leq \left \lceil \frac n {d-1} \right \rceil.
$$
It is clearly the stronger result for large $d$. However, both theorems provide sufficient lower bounds on $n$ that are approximately linear in $k$, and for small $d$, we obtain a better growth rate from Theorem~\ref{bounds}. For example, Theorem~\ref{bounds} guarantees the existence of a $k$-anticlique for all quantum graphs $\V \subseteq M_n$ with $\dim \V = 4$ for $n \geq 6k + o(k)$, while \cite{Weaver3}*{Theorem~2.10} guarantees the same conclusion only for $n \geq 12k + o(k)$.
\end{remark}

\section*{Acknowledgments}

We thank Julien Ross and Peter Selinger for helpful discussion.


\begin{thebibliography}{99}

\bibitem{ChoiEffros}
M.-D. Choi and
E. G. Effros,
\textit{Injectivity and Operator Spaces},
J. Func. Anal.
\textbf{24}
(1977),
no. 2,
156--209.

\bibitem{ChoiGiesingerHolbrookKribs}
M.-D. Choi,
M. Giesinger,
J. A. Holbrook, and 
D. W. Kribs,
\textit{Geometry of higher-rank numerical ranges},
Linear Multilinear Algebra
\textbf{56}
(2007),
53--64.

\bibitem{ChoiKribsZyczkowski}
M.-D. Choi,
D. W. Kribs, and
K. \.{Z}yczkowski,
\textit{Higher-rank numerical ranges and compression problems},
Linear Algebra Appl.
\textbf{418}
(2006),
828--839.

\bibitem{DuanSeveriniWinter}
R. Duan,
S. Severini, and
A. Winter,
\textit{Zero-error communication via quantum channels, noncommutative graphs, and a quantum Lov\'asz number},
IEEE Trans. Inform. Theory
\textbf{59}
(2013),
no. 2,
1164--1174.

\bibitem{KnillLaflamme}
E. Knill and
R. Laflamme,
\emph{Theory of quantum error-correcting codes},
Phys. Rev. A
\textbf{55}
(1997),
no. 2,
900--911.

\bibitem{KuperbergWeaver}
G. Kuperberg and N. Weaver,
\textit{A von Neumann algebra approach to quantum metrics},
Mem. Amer. Math. Soc.
\textbf{215}
(2012),
1--80.

\bibitem{LiPoonSze}
C.-K. Li,
Y.-T. Poon,
N.-S. Sze,
\textit{Condition for the higher rank numerical range to be non-empty},
Linear Multilinear Algebra
\textbf{57}
(2009),
365--368.

\bibitem{McKayRadziszowski}
B. D. McKay and
S. P. Radziszowski,
$R(4,5) = 25$,
J. Graph Theory
\textbf{19}
(1995),
no. 3,
209--322.

\bibitem{Ramsey}
F. P. Ramsey,
\textit{On a problem of formal logic},
Proc. London Math. Soc. (2)
\textbf{30}
(1930),
no. 1,
1--552.

\bibitem{Turan}
P. Tur\'{a}n, 
\textit{Egy gr\'{a}felm\'{e}leti sz\'{e}ls\H{o}\'{e}rt\'{e}kfeladatr\'{o}l},
Mat. Fiz. Lapok
\textbf{48}
(1941),
436--452.

\bibitem{Weaver}
N. Weaver,
\textit{Quantum relations},
Mem. Amer. Math. Soc.
\textbf{215}
(2012),
81--140.

\bibitem{Weaver2}
N. Weaver,
\textit{A ``Quantum'' Ramsey theorem for operator systems},
Proc. Amer. Math. Soc.
\textbf{145}
(2017),
no. 11,
4595--4605.

\bibitem{Weaver3}
N. Weaver,
\textit{The ``quantum'' Tur\'an problem for operator systems},
Pacific J. Math.
\textbf{301}
(2019),
no. 1,
335--349.

\end{thebibliography}
\end{document}